\newtheorem{theorem}{Theorem}[section]
\newtheorem{lemma}[theorem]{Lemma}
\newtheorem{proposition}[theorem]{Proposition}
\newcommand\R{\mathbb{R}}
\newcommand\B{\mathbb{B}}
\newcommand\CO{\mathcal{O}}
\DeclareMathOperator{\conv}{conv}
\title[Minimum-area circumscribed quadrangles]{On the minimal area of quadrangles circumscribed about planar
convex bodies}
\author{Ferenc Fodor} 
\address{Bolyai Institute, University of Szeged, Aradi v\'ertan\'uk tere 1, 6720 Szeged, Hungary} 
\email{fodorf@math.u-szeged.hu}
\author{Florian Grundbacher}
\address{Department of Mathematics, Technical University of Munich, Boltzmannstrasse 3, 85748 Garching, Germany}
\email{florian.grundbacher@tum.de}
\keywords{Approximation, circumscribed polygon, minimum-area quadrangle, planar convex body, stability}
\subjclass[2020]{52A27, 52A10, 52A40}
\date{June 5, 2025}
\begin{document}

\begin{abstract}
We show that every planar convex body is contained in a quadrangle whose area is less than $(1 - 2.6 \cdot 10^{-7}) \sqrt{2}$ times the area of the original convex body, improving the best known upper bound by W. Kuperberg.
\end{abstract}

\maketitle

\section{Introduction and Results}

It is one of the classical problems in discrete geometry to investigate the approximation properties of planar convex figures by convex $n$-gons.
This problem is not only interesting for its own sake, but it also plays an important role in the theory of packing and covering, see L.~Fejes T\'oth \cite{Ft}, and in computational geometry, for references, see Hong et al.~\cite{H23}.

Let $K$ be a convex body (compact convex set with non-empty interior) in the Euclidean plane $\R^2$. For $n\geq 3$, let $C_n(K)$ ($I_n(K)$) be a convex polygon with at most $n$ sides containing $K$ (contained in $K$) that has minimal (maximal) area.
Let us denote the area of (Lebesgue) measurable sets by $|\cdot|$. The quantities 
\[
    c_n=\inf \frac{|C_n(K)|}{|K|},
\]
and
\[
    i_n=\sup \frac{|I_n(K)|}{|K|},
\]
where the infimum and the supremum are taken over all convex bodies $K$ in $\R^2$, have been studied extensively. It can be shown by standard compactness arguments that both $c_n$ and $i_n$ are, in fact, attained. In the inscribed case, Sas \cite{Sas} proved that 
\[
    i_n=\frac{n}{2\pi}\sin \frac{2\pi}{n},
\]
where equality holds if and only if $K$ is an ellipse. As is too common in approximation problems, we know less about the circumscribed case. The only $c_i$ whose exact value is known is $c_3=2$, and the extremal figures are parallelograms; see Gross \cite{Gr}, Eggleston \cite{E}, and Chakerian \cite{Ch}. The upper estimate $c_4\leq \sqrt 2$ was proved by Chakerian and Lange \cite{ChLa} (see also Chakerian \cite{Ch}). W.~Kuperberg \cite{Ku} proved that $c_4<\sqrt 2$, but did not give an explicit upper bound less than $\sqrt 2$. Kuperberg conjectured that the extremal figures are the affine regular pentagons and that $c_4= \frac{3}{\sqrt 5}$. Hong et al.~\cite{H23} proved that every unit-area convex
pentagon is contained in a convex quadrangle of area at most $\frac{3}{\sqrt 5}$, and that equality holds only for affine regular pentagons, thus $c_4\geq \frac{3}{\sqrt 5}$. 
However, $c_4$ is still unknown for general planar convex bodies $K$, and not even an explicit $\varepsilon>0$ has been found such that $c_4\leq (1-\varepsilon)\sqrt 2$ for all $K$.

We note that in the special case when $K$ is centrally symmetric, Petty \cite{P55} proved that the area of the minimum-area parallelogram circumscribed about $K$ is at most $\frac 43 |K|$, with equality only for affine regular hexagons. Since minimum-area circumscribed quadrangles around centrally symmetric planar convex bodies are also centrally symmetric with the same centre, $c_4=\frac 43$ for such planar convex bodies (see also Pe\l{}czi\'nski and Szarek \cite{PSz}). 

For $5\leq n\leq 11$, the currently known best upper bound is $c_n\leq \sec\frac\pi n$ due to Ismailescu \cite{Is}, who improved on an earlier bound by Chakerian \cite{Ch}. For $n\geq 12$, L.~Fejes T\'oth's bounds \cite{FTL40} are still the best: 
$$\frac n\pi\tan\frac\pi n\leq c_n\leq \frac {n-2}\pi\tan\frac\pi {n-2}.$$
For more information and references about related results, we suggest the reader consult the Notes section of L.~Fejes T\'oth's seminal book \cite{Ft}, the \emph{Lagerungen}, which has recently been translated into English by G.~Fejes T\'oth and W.~Kuperberg and which contains an up-to-date survey on the current state of the art on problems mentioned in the book. We also refer to the introduction of Hong et al.~\cite{H23} for an overview of results on approximations of convex $m$-gons by $n$-gons. 

Our main theorem adds to Kuperberg's result \cite{Ku} in which it is proved that $c_4<\sqrt2$. 
Our goal is to find an $\varepsilon>0$ such that for every planar convex body $K$ there exists a quadrangle $Q$ such that $K\subset Q$ and $|Q|\leq (1-\varepsilon)\sqrt 2 \, |K|$. We prove the following theorem. 

\begin{theorem} \label{thm:main}
Let $K \subset \R^2$ be a convex body.
Then there exists a quadrangle $Q \subset \R^2$ with $K \subset Q$ and $|Q| < (1 - 2.6 \cdot 10^{-7}) \sqrt{2} \, |K|$.
\end{theorem}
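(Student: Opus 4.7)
The plan is to quantify Kuperberg's proof \cite{Ku} that $c_4 < \sqrt{2}$, following the inscribed/circumscribed-triangle machinery underlying the Chakerian--Lange $\sqrt{2}$ bound. The idea is to run a case analysis parameterized by a maximum-area inscribed triangle, show that in the "generic" part of parameter space the Chakerian--Lange inequalities can already be tightened by a concrete margin, and then close the "critical" parameter region by a stability argument using an alternative construction.

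I would begin by fixing a maximum-area inscribed triangle $T = \triangle a_1 a_2 a_3$ of $K$. A standard fact is that $K \subset T^{\ast}$, where $T^{\ast}$ is the anti-medial triangle whose sides are parallel to those of $T$ and each pass through the opposite vertex $a_i$; one has $|T^{\ast}| = 4|T|$ and the $a_i$ are midpoints of the sides of $T^{\ast}$. Denote the vertices of $T^{\ast}$ by $v_1, v_2, v_3$. For each $i \in \{1,2,3\}$ let $L_i$ be the supporting line of $K$ parallel to the side $v_jv_k$ of $T^{\ast}$ on the $v_i$-side of the opposite side $a_ja_k$ of $T$, and parameterize its position by $\mu_i \in [0,1]$ so that $L_i$ cuts off from $T^{\ast}$ a triangular corner of area $\mu_i^2 |T|$. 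The quadrangle $Q_i \supset K$ obtained by replacing the corner at $v_i$ by the region cut off by $L_i$ has $|Q_i| = (4 - \mu_i^2)|T|$.

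For the generic case, I would derive a lower bound on $|K|$: taking contact points $p_i \in K \cap L_i$, the hexagon $\conv(T \cup \{p_1,p_2,p_3\})$ lies in $K$ and has area $(4 - \mu_1 - \mu_2 - \mu_3)|T|$. Maximality of $T$ further forces any three-point subset of $K$, in particular $\{p_1,p_2,p_3\}$, to span a triangle of area at most $|T|$, which cuts out a feasible region of $(\mu_1,\mu_2,\mu_3)$. Combined with $|K| \le |Q_i|$, these inequalities give $\min_i |Q_i|/|K| \le \sqrt{2}$, with equality possible only in a narrow critical parameter region; outside this region one obtains $\min_i|Q_i|/|K| \le (1-\varepsilon)\sqrt{2}$ with a concrete $\varepsilon$.

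In the critical region, $K$ is pinned near a specific extremal configuration; in the symmetric subcase $K$ is close to an affine regular hexagon, which is centrally symmetric, and Petty's theorem \cite{P55} furnishes a circumscribed parallelogram of area at most $\frac{4}{3}|K|$ --- well below $\sqrt{2}|K|$. For asymmetric $K$ in the critical region, an analogous alternative construction (for instance, a tilted tangent line of $K$ that simultaneously cuts off two adjacent corners of $T^{\ast}$) provides a similar improvement. A quantitative continuity estimate propagates the gain across a neighborhood of the critical locus, and combined with the generic bound yields the explicit $\varepsilon > 2.6 \cdot 10^{-7}$. The main obstacle is the stability step: making the improvement uniform and explicit across the full critical region, including asymmetric perturbations of $K$ away from the near-hexagonal extremal shape, is delicate and yields only a small margin, which accounts for the smallness of the final $\varepsilon$.
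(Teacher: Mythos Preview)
Your framework has a genuine gap at the very first step: the quadrangles $Q_i$ together with the hexagon lower bound on $|K|$ do \emph{not} yield the baseline $\sqrt{2}$. In the symmetric case $\mu_1=\mu_2=\mu_3=\mu$ one has
\[
    \frac{\min_i |Q_i|}{|K|}
    \;\le\;
    \frac{(4-\mu^2)|T|}{(4-3\mu)|T|}
    \;=\;\frac{4-\mu^2}{4-3\mu},
\]
and this ratio is increasing on $[0,1]$, equal to $16/9$ at $\mu=2/3$ and to $3$ at $\mu=1$. The extra constraint $|\triangle p_1p_2p_3|\le |T|$ does not cut down the $(\mu_1,\mu_2,\mu_3)$–region in the way you suggest: it depends on the \emph{positions} of the contact points $p_i$ along the lines $L_i$, not just on the heights $\mu_i$, and in the symmetric situation with centred $p_i$ it only forces $\mu\ge 2/3$, where the ratio above is still $16/9>\sqrt{2}$. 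So the sentence ``these inequalities give $\min_i |Q_i|/|K|\le\sqrt{2}$'' is simply false for the quadrangles you have written down; the Chakerian--Lange $\sqrt{2}$ bound does not come from cutting one corner off the anti-medial triangle. Without a correct $\sqrt{2}$ baseline there is nothing to stabilise, and the rest of the outline (the Petty step for near-hexagonal $K$, the unspecified ``tilted tangent line'' for asymmetric $K$, the unquantified continuity estimate) never gets off the ground. No computation in your proposal produces the number $2.6\cdot 10^{-7}$.

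The paper proceeds along an entirely different axis. It starts from Ismailescu's proof of $c_4\le\sqrt{2}$: take a minimum-area circumscribed quadrangle $Q$, normalise its Varignon parallelogram to $P=\B_\infty^2\subset K$, let $Q'$ be the axis-parallel circumscribed rectangle of $K$, and form the octagon $\CO=\conv(P\cup\{v^1,v^2,w^1,w^2\})\subset K$ from the four extreme points of $K$ in the coordinate directions. This gives the clean chain
\[
    |Q|\le\sqrt{|Q|\,|Q'|}=\sqrt{2\,|P|\,|Q'|}\le\sqrt{2}\,|\CO|\le\sqrt{2}\,|K|,
\]
and the improvement comes from a finite case split on the three inequalities: if $|Q'|>c_1|Q|$, or if $Q'$ is far from a square, or if $K$ is not contained in $(1+r)\CO$, one gains an explicit factor. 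In the remaining case ($Q'$ close to a square of area $8$ and $K$ close to $\CO$) a concrete construction (Lemma~\ref{lem:octagon}) produces a quadrangle $\tilde{Q}\supset\CO$ with $|\tilde{Q}|<8$, contradicting minimality of $Q$. All constants are then chosen numerically to balance the cases and yield the stated $\varepsilon$.
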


The methods we employ can be considered rather elementary,
and our ideas are based on an approach by Ismailescu \cite{Is}.
He also gave a proof of the fact that $c_4\leq \sqrt{2}$,
and it turns out that using appropriate stability improvements in his arguments
allows us to find an explicit $\varepsilon$ as described above.

In Section~\ref{sec:Ismailescu}, we describe Ismailescu's proof with some modifications. Section~\ref{sec:improvements} contains some geometric preparations. We prove Theorem~\ref{thm:main} in Section~\ref{sec:main}.

\section{Original Proof by Ismailescu} \label{sec:Ismailescu}

Let us first introduce the necessary notation.
For $X,Y \subset \R^2$, $v \in \R^2$, and $\rho \in \R$,
we write $X + Y := \{ x + y : x \in X, y \in Y\}$ for the Minkowski sum of $X$ and $Y$.
The $v$-translation and $\rho$-dilatation of $X$ are given by $v + X = \{v\} + X$ and $\rho X := \{ \rho x : x \in X\}$, respectively.
The convex hull of $X$ is abbreviated as $\conv(X)$.
We denote the maximum norm on $\R^2$ by $\left\| \cdot \right\|_\infty$, with associated unit ball $\B_\infty^2$.

Second, we discuss the existence of minimum-area quadrangles circumscribed about a planar convex body $K \subset \R^2$. Note that such a quadrangle always has to be convex.
Moreover, it is not difficult to see that there exist some points $v^1, \ldots, v^4 \in \R^2$ such that $K \subset Q := \conv(\{ v^1, \ldots, v^4 \})$ and $Q$ has minimum possible area.
If $Q$ has precisely three vertices, then all of them must belong to $K$, that is, $K = Q$.
Otherwise, we could separate $K$ from any vertex of $Q$ that $K$ does not contain
and intersect $Q$ with the separating half-space containing $K$
to obtain another convex hull of at most four points with smaller area than $Q$.
If $Q$ has at most two vertices, then $|Q| = |K| = 0$, which cannot happen since $K$ has non-empty interior.
Either way, the factor $\sqrt{2}$ can be reduced to $1$.
Throughout the paper, we may therefore always assume that $Q$ is a proper convex quadrangle, that is, $Q$ has exactly four vertices.

Next, we require the following two results.
The first can be found, for example, in \cite{Ft}*{p.~ 9}.

\begin{proposition} \label{prop:min_quad}
Let $K \subset \R^2$ be a convex body and $Q \subset \R^2$ a proper quadrangle
such that $Q$ is a minimum-area circumscribed quadrangle of $K$.
Then all midpoints of the edges of $Q$ belong to $K$.
\end{proposition}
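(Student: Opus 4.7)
The plan is a contradiction: assume the midpoint $m$ of some edge $e$ of $Q$ lies outside $K$, and construct a proper quadrangle $Q' \supset K$ with $|Q'| < |Q|$, contradicting the minimality of $Q$.

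First, $\ell_e$ (the line containing $e$) must support $K$ — otherwise, translating $\ell_e$ slightly toward the interior of $Q$ with the other three edge lines fixed would give a smaller circumscribed quadrangle. Hence $T := \ell_e \cap K$ is a nonempty closed segment, and since $m \in \ell_e \setminus K$, $T$ lies entirely on one side of $m$ along $\ell_e$. Pick coordinates so that $\ell_e$ is the $x$-axis, $K \subset \{y \geq 0\}$, $e = [A, B]$ with $A = (-a, 0)$ and $B = (b, 0)$ for some $a, b > 0$, and — after relabeling $A, B$ if necessary — $T \subset \{x \leq c\}$ for some $c < x_m := (b-a)/2$, the $x$-coordinate of $m$; let $t := (c, 0)$ be the right endpoint of $T$.

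To build $Q'$, for small $\varepsilon > 0$ let $\ell'$ be the supporting line of $K$ whose outward unit normal $(\sin\varepsilon, -\cos\varepsilon)$ is the counterclockwise $\varepsilon$-rotation of the outward normal $(0,-1)$ of $\ell_e$. Because this perturbed normal maximizes the functional $x\sin\varepsilon - y\cos\varepsilon$ over $T$ at its rightmost point $t$, a short support-function computation shows that, to first order in $\varepsilon$, $\ell'$ coincides with the rotation of $\ell_e$ about $t$ by angle $\varepsilon$. Define $Q'$ by replacing the edge of $Q$ on $\ell_e$ with the edge that $\ell'$ cuts out between the two adjacent edge lines $\ell_{e_2}, \ell_{e_4}$ of $Q$, keeping the remaining edges. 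For $\varepsilon$ small, $Q'$ is a proper quadrangle, and $Q' \supset K$ because $\ell'$ supports $K$ and the other three edges are inherited from $Q$.

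Finally, expand $|Q'| - |Q|$ to leading order. Setting $A' := \ell' \cap \ell_{e_4}$ and $B' := \ell' \cap \ell_{e_2}$, the symmetric difference $Q \triangle Q'$ decomposes, up to $O(\varepsilon^2)$, into two triangles sharing the vertex $t$: the triangle $(A, t, A')$ lies below $\ell_e$ and belongs to $Q' \setminus Q$, and the triangle $(B, t, B')$ lies above $\ell_e$ and belongs to $Q \setminus Q'$. Each has a side of length $a+c$ or $b-c$ on the $x$-axis and a perpendicular height equal to $|y_{A'}| = (a+c)\varepsilon$ or $|y_{B'}| = (b-c)\varepsilon$ to leading order, independent of the angles at which the adjacent edges meet $\ell_e$. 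Hence
\[
|Q'| - |Q| = \tfrac{1}{2}\bigl[(a+c)^2 - (b-c)^2\bigr]\varepsilon + O(\varepsilon^2) = -(a+b)(x_m - c)\,\varepsilon + O(\varepsilon^2),
\]
which is strictly negative for small $\varepsilon > 0$ because $c < x_m$, contradicting the minimality of $|Q|$. The main technical point is this area expansion: one must verify that the heights of the two triangles are indeed $(a+c)\varepsilon$ and $(b-c)\varepsilon$ independently of the adjacent-edge angles, with the resulting horizontal shifts of $A'$ and $B'$ contributing only to $O(\varepsilon^2)$ because the bases lie on the $x$-axis.
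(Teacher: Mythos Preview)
The paper does not prove Proposition~\ref{prop:min_quad}; it simply cites \cite{Ft}*{p.~9}. Your argument is correct and is essentially the classical rotation argument one finds in the literature: if the contact set $T = \ell_e \cap K$ lies strictly to one side of the midpoint, tilting the edge line about (the right endpoint of) $T$ removes a larger triangle on the far side than it adds on the near side, contradicting minimality.

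Two minor remarks. First, your error term should in general be $o(\varepsilon)$ rather than $O(\varepsilon^2)$: the one-sided directional derivative of the support function $h_K$ at $(0,-1)$ in direction $(1,0)$ equals $c$, giving $h_K(\sin\varepsilon,-\cos\varepsilon) = c\varepsilon + o(\varepsilon)$, but without smoothness of $\partial K$ you cannot upgrade this to $O(\varepsilon^2)$. This does not affect the conclusion, since only the sign of the leading term matters. Second, the phrase ``up to $O(\varepsilon^2)$'' for the symmetric-difference decomposition is slightly loose: $Q \triangle Q'$ is \emph{exactly} the union of the two curvilinear triangles bounded by $\ell_e$, $\ell'$, and the adjacent edge lines, and the only approximation is in identifying their common vertex on $\ell_e \cap \ell'$ with $t$. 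Again this is harmless for the first-order computation.
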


The second result is known as Varignon's theorem.

\begin{proposition} \label{prop:midpoints}
Let $Q$ be a proper convex quadrangle and $P \subset Q$ be a convex quadrangle with vertices at the midpoints of the edges of $Q$.
Then $P$ is a parallelogram, and $|Q| = 2 \, |P|$.
\end{proposition}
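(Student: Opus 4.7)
The plan is to index the vertices of $Q$ cyclically as $v^1, v^2, v^3, v^4$ and set $m^i := \tfrac{1}{2}(v^i + v^{i+1})$ for $i \in \{1,2,3,4\}$ (indices mod $4$), so that $P = \conv(\{m^1, m^2, m^3, m^4\})$. For the parallelogram claim, I would directly compute
\[
m^2 - m^1 = \tfrac{1}{2}(v^3 - v^1) = m^3 - m^4,
\]
which shows the edges $m^1 m^2$ and $m^4 m^3$ are parallel translates of one another; an analogous computation for the other pair of opposite edges, using the diagonal $v^2 v^4$, finishes the proof that $P$ is a parallelogram (and in particular convex, justifying the vertex ordering).

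For the area identity, I would decompose $Q$ (up to sets of measure zero) as the union of $P$ and the four corner triangles $T^i := \conv(\{v^i, m^{i-1}, m^i\})$, $i \in \{1,2,3,4\}$. Each $T^i$ is the image of the triangle $\conv(\{v^i, v^{i-1}, v^{i+1}\})$ under the homothety centered at $v^i$ with ratio $\tfrac{1}{2}$, so $|T^i|$ is a quarter of the area of this larger triangle. Splitting $Q$ along the diagonal $v^1 v^3$ yields $|\conv(\{v^1,v^2,v^3\})| + |\conv(\{v^1,v^3,v^4\})| = |Q|$, whence $|T^2| + |T^4| = |Q|/4$; the diagonal $v^2 v^4$ analogously gives $|T^1| + |T^3| = |Q|/4$. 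Summing, $\sum_i |T^i| = |Q|/2$, and consequently $|P| = |Q| - |Q|/2 = |Q|/2$.

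The main point to verify carefully is that the above decomposition of $Q$ is valid: one needs $P \subset Q$, which follows from the $m^i$ lying on the boundary of $Q$ together with the convexity of $Q$, and one needs the four triangles $T^i$ to be pairwise essentially disjoint from each other and from $P$. Both facts are immediate consequences of $Q$ being a \emph{proper} convex quadrangle, so the points $v^1, v^2, v^3, v^4$ are in strict convex position and the chords $m^i m^{i+1}$ cut off disjoint corners. Beyond this combinatorial bookkeeping, only the scaling identity for the corner triangles and the addition of four quarter-areas are used; no further computation is needed.
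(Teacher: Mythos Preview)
Your proof is correct and is the standard argument for Varignon's theorem. Note that the paper does not actually supply a proof of this proposition: it merely attributes the statement to Varignon and moves on. So there is no ``paper's proof'' to compare against; your argument fills in exactly what the paper omits, and nothing in it is problematic. The parallelogram claim follows from your midpoint computation, and the area identity from your corner-triangle decomposition, both as you describe.
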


We now turn to the proof by Ismailescu.

\begin{proposition} \label{prop:ismailescu}
Let $K \subset \R^2$ be a convex body.
Then there exists a quadrangle $Q \subset \R^2$ with $K \subset Q$ and $|Q| \leq \sqrt{2} \, |K|$.
\end{proposition}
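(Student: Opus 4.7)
The plan is to follow Ismailescu~\cite{Is}. Let $Q$ be a minimum-area quadrangle circumscribed about $K$; by the discussion preceding the statement such a $Q$ exists and is a proper convex quadrangle, with vertices $v_1,\ldots,v_4$. Set $m_i := \frac{1}{2}(v_i+v_{i+1})$. By Proposition~\ref{prop:min_quad}, each $m_i$ belongs to $K$, and by Proposition~\ref{prop:midpoints}, the parallelogram $P := \conv\{m_1,\ldots,m_4\}$ satisfies $|P| = \frac{1}{2}|Q|$. Convexity of $K$ gives $P \subset K$, which already yields the trivial estimate $|Q| \leq 2|K|$.

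To improve this by the factor $1/\sqrt{2}$, I observe that the ratio $|Q|/|K|$ is invariant under area-preserving affine transformations of the plane, and normalize so that $P$ is the unit square $P = [-1,1]^2 = \B_\infty^2$. Then $|P|=4$ and $|Q|=8$, and the midpoint conditions force the vertices of $Q$ to take the form $v_1 = (a,b)$, $v_2=(2-a,2-b)$, $v_3=(a-4,b)$, $v_4=(2-a,-2-b)$ for some $(a,b) \in (1,3)\times(-1,1)$, where the last constraint is imposed by convexity of $Q$. In this normalization the desired inequality $|Q| \leq \sqrt{2}|K|$ becomes $|K| \geq 4\sqrt{2}$.

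The main step is then to exhibit a (possibly different) quadrangle $Q'$ with $K \subset Q'$ and $|Q'| \leq \sqrt{2}|K|$; by minimality of $Q$ this yields $|Q| \leq |Q'| \leq \sqrt{2}|K|$. Following Ismailescu, $Q'$ is built as the intersection of four half-planes defined by support lines of $K$ in suitably chosen directions---typically adapted to the geometry of $P$, for instance along its diagonals---so that $K \subset Q'$ is automatic and $|Q'|$ can be bounded from above by a quantity involving $|K|$, $|P|$, and the support function of $K$. The main obstacle is the quantitative area estimate yielding exactly the constant $\sqrt{2}$: a priori $K$ could be almost as small as $P$, with $|K|$ close to $4 < 4\sqrt{2}$, but this cannot occur because the minimum-area quadrangle circumscribed about $P$ itself is $P$ (area $4$), not a quadrangle of area~$8$, so the minimality of $Q$ forces $K$ to be substantially larger than $P$. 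Converting this non-degeneracy into the sharp estimate $|Q'| \leq \sqrt{2}|K|$ is the core of Ismailescu's argument, and it is precisely this step that Section~\ref{sec:main} will sharpen to extract the explicit $\varepsilon>0$ of Theorem~\ref{thm:main}.
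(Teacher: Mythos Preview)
Your setup is correct and matches the paper---minimum-area $Q$, midpoint parallelogram $P$, normalization to $P=\B_\infty^2$---but the proof stops precisely at the point where the actual work begins. You acknowledge this yourself: ``Converting this non-degeneracy into the sharp estimate $|Q'|\le\sqrt{2}\,|K|$ is the core of Ismailescu's argument.'' That core is absent from your proposal. Saying $Q'$ is built from support lines ``in suitably chosen directions---typically adapted to the geometry of $P$, for instance along its diagonals'' is not a construction, and your heuristic (that $K$ cannot be too close to $P$ because $P$ is its own minimum-area circumscribed quadrangle) does not by itself produce the constant $\sqrt{2}$.

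Here is what the paper actually does, and what you are missing. Take $Q'$ to be the axis-parallel bounding rectangle of $K$, say $Q'=[a_1,b_1]\times[a_2,b_2]$, and write $x=b_1-a_1$, $y=b_2-a_2$, so $|Q'|=xy$. Let $v^1,v^2,w^1,w^2\in K$ be points where $K$ touches the four sides of $Q'$, and form the octagon $\CO=\conv(P\cup\{v^1,v^2,w^1,w^2\})\subset K$. A direct computation gives $|\CO|=x+y$. Now the chain
\[
|Q|\;\le\;\sqrt{|Q|\,|Q'|}\;=\;\sqrt{2\,|P|\,|Q'|}\;=\;\sqrt{2}\,\sqrt{4xy}\;\le\;\sqrt{2}\,(x+y)\;=\;\sqrt{2}\,|\CO|\;\le\;\sqrt{2}\,|K|
\]
uses minimality ($|Q|\le|Q'|$), Varignon ($|Q|=2|P|$), the elementary inequality $4xy\le(x+y)^2$, and $\CO\subset K$. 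Note in particular that the argument does \emph{not} prove $|Q'|\le\sqrt{2}\,|K|$ directly, as you propose; it bounds the geometric mean $\sqrt{|Q|\,|Q'|}$ instead. The explicit parametrization of the vertices of $Q$ that you wrote down is not used.
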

\begin{proof}[Proof by Ismailescu \cite{Is}]
We may assume that there exists a proper convex quadrangle $Q$ such that $K \subset Q$
and $Q$ has minimum area among all circumscribed quadrangles of $K$.
Let $P$ be the convex quadrangle formed from the midpoints of the edges of $Q$.
By Proposition~\ref{prop:min_quad}, we have $P \subset K$.
Moreover, Proposition~\ref{prop:midpoints} shows that $P$ is a parallelogram with $|Q| = 2 \, |P|$.
By applying an appropriate affine transformation if necessary, we may assume that $P = \B_\infty^2=\conv(\{(\pm 1,0), (0,\pm 1)\})$.

Now, let $a_1, a_2 \leq -1$ and $b_1, b_2 \geq 1$ such that $Q' = \{ x \in \R^2 : a_1 \leq x_1 \leq b_1, a_2 \leq x_2 \leq b_2 \}$
is an axis-parallel quadrangle circumscribed about $K$.
In particular, there exist $v^1, v^2, w^1, w^2 \in K$ such that $v^1_1 = a_1$, $v^2_2 = a_2$, $w^1_1 = b_1$, and $w^2_2 = b_2$.
Let $\CO = \conv (P \cup \{ v^1, v^2, w^1, w^2 \})$ be the octagon shown in Figure~\ref{fig:Ismailescu}.
Since $K$ is convex and the vertices of $P$ are on the boundary of $K$,
we can compute $|\CO|$ by splitting it into four quadrangles
whose vertices are the origin $0$, two adjacent vertices of $P$, and one of $v^1, v^2, w^1, w^2$, respectively, such that
\[
    |\CO|
    = \frac{1}{2} (2 |a_1|) + \frac{1}{2} (2 |a_2|) + \frac{1}{2} (2 b_1) + \frac{1}{2} (2 b_2)
    = (b_1 - a_1) + (b_2 - a_2).
\]
Writing $x = b_1 - a_1$ and $y = b_2 - a_2$, we obtain $|\CO| = x + y$.
Moreover, we note that $|Q'| = x y$.
By assumption and the above, we have $8 = 2 \, |P| = |Q| \leq |Q'|$.
Thus,
\begin{align}
\begin{split} \label{eq:main_estimate}
    |Q|
    & \leq \sqrt{ |Q| \, |Q'| }
    = \sqrt{ 2 \, |P| \, |Q'| }
    \\
    & = \sqrt{2} \, \sqrt{ 4 xy }
    \leq \sqrt{2} \, \sqrt{ x^2 + 2 xy + y^2 }
    = \sqrt{2} \, |\CO|
    \leq \sqrt{2} \, |K|,
\end{split}
\end{align}
where we used that $4 xy \leq x^2 + 2 xy + y^2$ by $x^2 - 2 xy + y^2 = (x-y)^2 \geq 0$.
\end{proof}

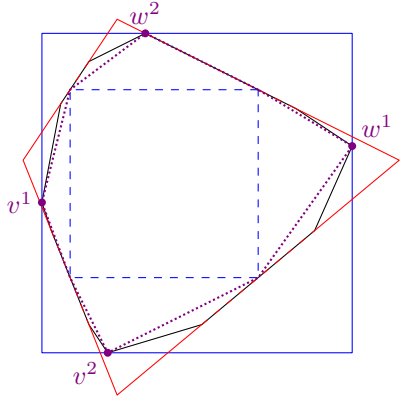
\begin{figure}[ht]
\newcommand\maxx{2}
\newcommand\maxxy{2/5}
\newcommand\minx{-13/10}
\newcommand\minxy{-1/5}
\newcommand\maxy{8/5}
\newcommand\maxyx{-1/5}
\newcommand\miny{-9/5}
\newcommand\minyx{-3/5}
\newcommand\scale{1.25}
\newcommand\ds{1.5/\scale pt}
\begin{tikzpicture}[line join=round, scale=\scale]
\draw[blue,dashed] (1,1) -- (-1,1) -- (-1,-1) -- (1,-1) -- cycle;

\draw (7/5,4/5) -- (-1/5,8/5) -- (-4/5,13/10) -- (-11/10,17/20) -- (-13/10,-1/5) -- (-6/5,-1/2) -- (-4/5,-3/2) -- (-3/5,-9/5) -- (2/5,-3/2) -- (8/5,-1/2) -- (2,2/5) -- cycle;

\draw[blue] ({\maxx},{\maxy}) -- ({\minx},{\maxy}) -- ({\minx},{\miny}) -- ({\maxx},{\miny}) -- cycle;

\draw[red] (8/5,-1/2) -- (5/2,1/4) -- (7/5,4/5);
\draw[red,dashed] (7/5,4/5) -- (-1/5,8/5);
\draw[red] (-1/5,8/5) -- (-1/2,7/4) -- (-4/5,13/10);
\draw[red,dashed] (-4/5,13/10) -- (-11/10,17/20);
\draw[red] (-11/10,17/20) -- (-3/2,1/4) -- (-6/5,-1/2);
\draw[red,dashed] (-6/5,-1/2) -- (-4/5,-3/2);
\draw[red] (-4/5,-3/2) -- (-1/2,-9/4) -- (2/5,-3/2);
\draw[red,dashed] (2/5,-3/2) -- (8/5,-1/2);

\draw[violet,densely dotted, thick] (1,1) -- ({\maxyx},{\maxy}) -- (-1,1) -- ({\minx},{\minxy}) -- (-1,-1) -- ({\minyx},{\miny}) -- (1,-1) -- ({\maxx},{\maxxy}) -- cycle;

\fill[violet] ({\minx},{\minxy}) circle(\ds) node[anchor=east] {$v^1$};
\fill[violet] ({\minyx},{\miny}) circle(\ds) node[anchor=north east] {$v^2$};
\fill[violet] ({\maxx},{\maxxy}) circle(\ds) node[anchor=south west] {$w^1$};
\fill[violet] ({\maxyx},{\maxy}) circle(\ds) node[anchor=south] {$w^2$};
\end{tikzpicture}
\caption{An example for the situation in the proof of Proposition~\ref{prop:ismailescu}: $K$ (black), $Q$ (red), $Q'$ (blue, solid), $P = \B_\infty^2$ (blue, dashed), $\CO$ (violet).}
\label{fig:Ismailescu}
\end{figure}

The estimate $\sqrt{ |P| \, |Q'| } \leq |\CO|$ was obtained through the Brunn--Minkowski inequality
in Ismailescu's (more general) original proof.
For our purposes, it is more straightforward to obtain it by elementary methods,
so that we do not need to consider stability estimates for the Brunn--Minkowski inequality later on.

Our slight improvement of the factor $\sqrt{2}$ is now based on
the idea of obtaining ``stability improvements'' for the steps in the main estimation in (the above version of) Ismailescu's proof.
In essence, we show that the factor can be improved trivially or $Q'$ must be almost a square of area $8$.
In the latter case, we show that there exists a quadrangle $\tilde{Q}$ that contains $\CO$
and has area $\gamma \, |\CO|$ for some explicit $\gamma < \sqrt{2}$.
Finally, we can again either trivially improve the factor $\sqrt{2}$ for $K$
or $K$ is contained in an only slightly larger dilatation of $\CO$,
where the previous fact gives an improvement of the general factor even in the last case.

\section{Improvement for Special Cases of Octagons} \label{sec:improvements}

In the following, we shall verify the claimed existence of a quadrangle $\tilde{Q}$
with $\CO \subset \tilde{Q}$ and $|\tilde{Q}| \leq \gamma |\CO|$
in case $Q'$ is almost a square of area $8$.
To simplify the statement of the result, we define for $c \geq \frac{14}{5}$ and $\delta \in [0,\frac{1}{10}]$ the function
\begin{align*}
    \zeta_{c,\delta}(t)
    & = \frac{c}{5 (1 - 2 \delta)}
    \cdot \frac{(c (3 - 4 \delta) + t - 1) (4 c (3 - 4 \delta) + (7 + 4 \delta - 20 \delta^2) (t-1))}{c (9 - 20 \delta + 20 \delta^2) + 4 (t - 1)}
    \\
    & \qquad\qquad - \frac{c}{5 (1 - 2 \delta)} (3 - 4 \delta) (t-1).
\end{align*}
This function is differentiable for all $t > -\frac{c}{2}$ with derivative
\[
    - \frac{2 c (1 - 2 \delta) (c^2 (9 - 48 \delta + 108 \delta^2 - 80 \delta^3) + c (9 - 20 \delta + 20 \delta^2) (t - 1) + 2 (t - 1)^2)}
    {(c (9 - 20 d + 20 d^2) + 4 (t - 1))^2}.
\]
By $c > 0$ and $\delta \leq \frac{1}{10}$, the sign of $\zeta_{d,\delta}'(t)$ is the same as the sign of
\begin{align*}
    & -\left( c^2 (9 - 48 \delta + 108 \delta^2 - 80 \delta^3) + c (9 - 20 \delta + 20 \delta^2) (t - 1) + 2 (t - 1)^2 \right)
    \\
    & = -2 \left( t - ( 1 - c (3 - 4\delta) ) \right) \left( t - \left( 1 - c \frac{3 - 12 \delta + 20 \delta^2}{2} \right) \right).
\end{align*}
Now, note that $c \geq \frac{14}{5}$ and $\delta \leq \frac{1}{10}$ give
\[
    1 - c (3 - 4 \delta)
    < 1 - c \frac{3 - 12 \delta + 20 \delta^2}{2}
    \leq 1 - c
    < - \frac{c}{2}.
\]
Therefore, we have $\zeta_{c,\delta}'(t) < 0$, meaning that $\zeta_{c,\delta}$ is decreasing on $[-\frac{c}{2},\infty)$.
In particular, we obtain
\begin{align}
\begin{split} \label{eq:zeta_dec}
    \zeta_{c,\delta}(t)
    & \leq \zeta_{c,\delta} \left( -\frac{c}{2} \right)
    \\
    & = \frac{c}{20} \left( 8 (c + 2) + \frac{4 c}{1 - 2 \delta} + \frac{(c - 2) (c (43 - 54 \delta) - 22 - 20 \delta)}{c (7 - 20 \delta + 20 \delta^2) - 4} \right).
\end{split}
\end{align}

\begin{lemma} \label{lem:octagon}
Let $v^1, v^2, w^1, w^2 \in \R^2$ such that
\[
    v^1_2, v^2_1, w^1_2, w^2_1 \in [-1,1],
	\quad
    - v^1_1, - v^2_2, w^1_1, w^2_2 \geq 1,
	\quad \text{and} \quad
    w_1^1 - v^1_1, w^2_2 - v^2_2 \leq c
\]
for some $c \geq \frac{14}{5}$.
Let $\delta \in [0,\frac{1}{10}]$.
Then there exists a quadrangle $\tilde{Q} \subset \R^2$ with $\{ v^1, v^2, w^1, w^2 \} \cup \B_\infty^2 \subset \tilde{Q}$ and
\[
    |\tilde{Q}|
    \leq \max \left\{
        \frac{c (c (1+\delta) + 2\delta))}{1 + 2 \delta},
	\zeta_{c,\delta} \left( -\frac{c}{2} \right)
    \right\}.
\]
\end{lemma}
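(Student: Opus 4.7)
The plan is to exhibit $\tilde Q$ by an explicit construction, possibly after a case split, and then to compute its area. Before making the construction, I would exploit the symmetries implicit in the hypotheses: the problem is unchanged if we swap the horizontal and vertical roles or reflect either axis, so one may reduce to a standard configuration (for instance $-v^1_1 \geq w^1_1$ and $-v^2_2 \geq w^2_2$, both lying in $[1, c-1]$). The parameter $\delta$ then plays the role of a slackness measuring how far the configuration is from the perfectly symmetric case $-v^i = w^i$.

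The two summands inside the maximum suggest a dichotomy driven by a $\delta$-dependent threshold. In the \emph{unbalanced} case, when at least one of the outer points is pushed sufficiently close to one side of $[-c/2, c/2]$, I would take $\tilde Q$ to be an axis-parallel rectangle (or a mild variant thereof) whose width is at most $c$ and whose height is reduced by an explicit factor exploiting the $\delta$-slackness. A direct product computation then gives $|\tilde Q| \leq \frac{c(c(1 + \delta) + 2\delta)}{1 + 2\delta}$; the specific form arises naturally from forcing one side of the rectangle to still touch $\B_\infty^2$ while trimming it on the opposite side where no outer point is present.

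In the \emph{balanced} case, when all four outer points are near-symmetric about the origin, I would construct a non-axis-parallel quadrangle whose two ``outer'' sides are the line through $\{w^1, w^2\}$ and the line through $\{v^1, v^2\}$, each potentially tilted so as to also support the diamond $\B_\infty^2$ at one of its vertices. The remaining two sides are determined by the choice of a single free parameter $t$, naturally interpreted as the signed position of one vertex of $\tilde Q$ along a line through $\B_\infty^2$. A careful area calculation — splitting $\tilde Q$ into two triangles and a central parallelogram — produces exactly $\zeta_{c,\delta}(t)$, with a rational first part coming from the triangles and a linear subtracted term coming from the parallelogram strip. The feasibility constraint that $\tilde Q$ remain convex and contain $\CO$ forces $t \geq -c/2$, and since the paper has already shown $\zeta_{c,\delta}$ to be decreasing on $[-c/2, \infty)$, the worst-case value is $\zeta_{c,\delta}(-c/2)$.

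The main obstacle is the algebraic bookkeeping in the balanced case: guessing the correct family of candidate quadrangles so that the area matches the somewhat baroque expression $\zeta_{c,\delta}$, and then verifying that the quadrangle indeed contains both $\B_\infty^2$ and every $v^i, w^j$, which reduces to a finite list of linear inequalities in the coordinates. Calibrating the dichotomy threshold correctly — so that whenever the unbalanced bound fails to apply the balanced construction is feasible with $t \geq -c/2$ — is the other delicate point. Once the candidate $\tilde Q$ is written down in coordinates, every check reduces to bounded algebra, and the lemma follows from inequality~\eqref{eq:zeta_dec}.
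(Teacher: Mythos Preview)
Your high-level plan matches the paper's proof: symmetry reduction, a $\delta$-threshold dichotomy, an explicit quadrangle in each case, and the monotonicity of $\zeta_{c,\delta}$ to finish. But two of the specific guesses are wrong and would not lead to the stated bounds.

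In the balanced case you propose taking the two slanted sides of $\tilde Q$ to be the lines through $\{w^1,w^2\}$ and through $\{v^1,v^2\}$. The paper does \emph{not} do this. Its quadrangle $\tilde Q^2$ has two axis-parallel sides (along $x_1=v^1_1$ and $x_2=v^2_2$) and two slanted sides $U'$, $U''$ whose positions are determined by the \emph{worst-case} locations allowed for $w^1,w^2$ under the $\delta$-closeness hypothesis, i.e.\ by $v^1_1$, $v^2_2$, $c$, $\delta$ alone, not by the actual points $w^1,w^2$. This is why the area formula $\zeta_{c,\delta}(t)$ depends on $c,\delta$ and a single coordinate but not on $w^1,w^2$; your construction would produce an area depending on all four outer points and would not match $\zeta_{c,\delta}$.

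Relatedly, $t$ is not a free parameter you optimise over: in the paper $t=v^1_1$, a datum of the configuration, and the area of $\tilde Q^2$ equals $\zeta_{c,\delta}(v^1_1)$ exactly. The constraint $t\geq -c/2$ is not a feasibility condition on $\tilde Q$ but simply the consequence $v^1_1\geq -c/2$ of the initial symmetry reduction $-v^1_1\leq w^1_1$ together with $w^1_1-v^1_1\leq c$. In the unbalanced case, ``rectangle or mild variant'' undersells what is needed: the paper's $\tilde Q^1$ keeps three sides of the bounding square and replaces the fourth corner by a single slanted cut through the point $(1,v^2_2+c)$, which is what produces the exact fraction $\frac{c(c(1+\delta)+2\delta)}{1+2\delta}$ after bounding $v^1_1\geq -c/2$.
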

\begin{proof}
We may assume that $-v^1_1 \leq w^1_1$ and $-v^2_2 \leq w^2_2$,
as we could otherwise reflect at the coordinate axes to ensure these assumptions.
In particular, we obtain $v^1_1, v^2_2 \geq - \frac{c}{2}$.

Our first step is based on the intuition (from Proposition~\ref{prop:min_quad})
that if the points $w^1$ and $w^2$ are not close to the midpoints of their respective edges of the square
$\conv(\{ (v^1_1,v^2_2), (v^1_1+c,v^2_2), (v^1_1, v^2_2+c), (v^1_1+c,v^2_2+c) \})$,
then we can find a quadrangle containing all of $S := \{ v^1, v^2, w^1, w^2 \} \cup \B_\infty^2$
with significantly enough smaller area then this square.
The square, which corresponds to (a superset of) $Q'$ from before,
serves as a starting point for a quadrangle that contains $S$,
where we want to slightly decrease the area using appropriate modifications.

Let us assume that $w^1_2 < v^2_2 + (\frac{1}{2} - \delta) c$.
We define a line $U \subset \R^2$ by
\[
    U
    = \left\{ (x_1,x_2) \in \R^2 : x_2 = - \frac{ (\nicefrac{1}{2} + \delta) c}{v^1_1 + c - 1} (x_1-1) + v^2_2 + c \right\}
\]
(see Figure~\ref{fig:case1}).
Then the points $(1, v_2^2+c)$ and $(v^1_1+c, v_2^2 + (\frac{1}{2}-\delta) c)$ lie on $U$.
Note that $U$ has a negative slope since $v^1_1 + c - 1 \geq \frac{c}{2} - 1 > 0$.
Thus, from $v^2_2+c \geq \frac{c}{2} > 1$, we see that $v^1$, $w^1$, $w^2$, and all of $\B_\infty^2$ lie below $U$.
Moreover, $w^1_2 < v^2_2 + (\frac{1}{2} - \delta) c$ implies that $w^1$ is also below $U$.
Altogether, the half-space below $U$ contains all of $S$.

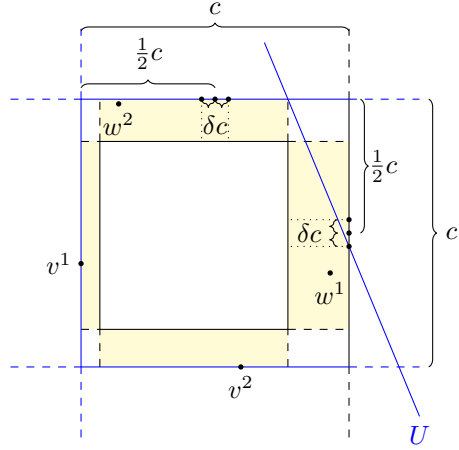
\begin{figure}[ht]
\newcommand\va{-1.2}
\newcommand\vb{-1.4}
\newcommand\vc{2.85}
\newcommand\vd{0.05}
\newcommand\pad{0.75}
\newcommand\scale{1.25}
\newcommand\ds{1/\scale pt}
\begin{tikzpicture}[line join=round, scale=\scale]
\fill[yellow!20] (\va,-1) -- (-1,-1) -- (-1,1) -- (\va,1) -- cycle;
\fill[yellow!20] (-1,\vb) -- (1,\vb) -- (1,-1) -- (-1,-1) -- cycle;
\fill[yellow!20] (1,-1) -- (\va+\vc,-1) -- (\va+\vc,1) -- (1,1) -- cycle;
\fill[yellow!20] (-1,1) -- (1,1) -- (1,\vb+\vc) -- (-1,\vb+\vc) -- cycle;

\draw[dashed, blue] (\va,\vb-\pad) -- (\va,\vb);
\draw[dashed, blue] (\va,\vb+\vc) -- (\va,\vb+\vc+\pad);
\draw[dashed] (\va+\vc,\vb-\pad) -- (\va+\vc,\vb);
\draw[dashed] (\va+\vc,\vb+\vc) -- (\va+\vc,\vb+\vc+\pad);
\draw[dashed, blue] (\va-\pad,\vb) -- (\va,\vb);
\draw[dashed, blue] (\va+\vc,\vb) -- (\va+\vc+\pad,\vb);
\draw[dashed, blue] (\va-\pad,\vb+\vc) -- (\va,\vb+\vc);
\draw[dashed, blue] (\va+\vc,\vb+\vc) -- (\va+\vc+\pad,\vb+\vc);

\draw[dashed] (\va,-1) -- (-1,-1);
\draw[dashed] (1,-1) -- (\va+\vc,-1);
\draw[dashed] (\va,1) -- (-1,1);
\draw[dashed] (1,1) -- (\va+\vc,1);
\draw[dashed] (-1,\vb) -- (-1,-1);
\draw[dashed] (-1,1) -- (-1,\vb+\vc);
\draw[dashed] (1,\vb) -- (1,-1);
\draw[dashed] (1,1) -- (1,\vb+\vc);

\draw[dotted] (\va+\vc,{\vb+(1/2+\vd)*\vc}) -- (1,{\vb+(1/2+\vd)*\vc});
\draw[dotted] (\va+\vc,{\vb+(1/2-\vd)*\vc}) -- (1,{\vb+(1/2-\vd)*\vc});

\draw[dotted] ({\va+(1/2+\vd)*\vc},\vb+\vc) -- ({\va+(1/2+\vd)*\vc},1);
\draw[dotted] ({\va+(1/2-\vd)*\vc},\vb+\vc) -- ({\va+(1/2-\vd)*\vc},1);

\draw (1,1) -- (-1,1) -- (-1,-1) -- (1,-1) -- cycle;

\draw[blue] (\va+\vc,\vb+\vc) -- (\va,\vb+\vc) -- (\va,\vb) -- (\va+\vc,\vb);
\draw (\va+\vc,\vb) -- (\va+\vc,\vb+\vc);

\draw[blue] plot[variable=\x,domain=0.75:\va+\vc+0.75] (\x,{-(0.5+\vd)*\vc/(\va+\vc-1)*(\x-1) + \vb+\vc}) node[below] {$U$};

\draw[decorate,decoration={brace,mirror,amplitude=3pt,raise=1ex}]
	(\va+\vc,{\vb+(1/2)*\vc}) -- (\va+\vc,{\vb+(1/2-\vd)*\vc});
\draw[decorate,decoration={brace,mirror,amplitude=3pt,raise=1ex}]
	(\va+\vc,{\vb+(1/2+\vd)*\vc}) -- (\va+\vc,{\vb+(1/2)*\vc});
\node[xshift=-3.5ex] at (\va+\vc,{\vb+(1/2)*\vc}) {$\delta c$};
\draw[decorate,decoration={brace,amplitude=3pt,raise=1ex}]
	(\va+\vc,\vb+\vc) -- (\va+\vc,{\vb+(1/2)*\vc}) node[midway,xshift=3ex] {$\frac{1}{2} c$};
\draw[decorate,decoration={brace,amplitude=3pt,raise=7ex}]
	(\va+\vc,\vb+\vc) -- (\va+\vc,\vb) node[midway,xshift=9ex] {$c$};

\draw[decorate,decoration={brace,mirror,amplitude=3pt,raise=1/2ex}]
	({\va+(1/2-\vd)*\vc},\vb+\vc) -- ({\va+(1/2)*\vc},\vb+\vc);
\draw[decorate,decoration={brace,mirror,amplitude=3pt,raise=1/2ex}]
	({\va+(1/2)*\vc},\vb+\vc) -- ({\va+(1/2+\vd)*\vc},\vb+\vc);
\node[yshift=-2.25ex] at ({\va+(1/2)*\vc},\vb+\vc) {$\delta c$};
\draw[decorate,decoration={brace,amplitude=3pt,raise=1ex}]
	(\va,\vb+\vc) -- ({\va+(1/2)*\vc},\vb+\vc) node[midway,yshift=4ex] {$\frac{1}{2} c$};
\draw[decorate,decoration={brace,amplitude=3pt,raise=6ex}]
	(\va,\vb+\vc) -- (\va+\vc,\vb+\vc) node[midway,yshift=8ex] {$c$};

\fill (\va,-0.3) circle(\ds) node[anchor=east] {$v^1$};
\fill (0.5,\vb) circle(\ds) node[anchor=north] {$v^2$};
\fill (\va+\vc-0.2,-0.4) circle(\ds) node[anchor=north] {$w^1$};
\fill (-0.8,\vb+\vc-0.05) circle(\ds) node[anchor=north] {$w^2$};

\fill (\va+\vc,{\vb+(1/2)*\vc}) circle(\ds);
\fill (\va+\vc,{\vb+(1/2+\vd)*\vc}) circle(\ds);
\fill (\va+\vc,{\vb+(1/2-\vd)*\vc}) circle(\ds);

\fill ({\va+(1/2)*\vc},\vb+\vc) circle(\ds);
\fill ({\va+(1/2+\vd)*\vc},\vb+\vc) circle(\ds);
\fill ({\va+(1/2-\vd)*\vc},\vb+\vc) circle(\ds);
\end{tikzpicture}
\caption{An example for the situation in the first part of the proof of Lemma~\ref{lem:octagon}.
The points $v^1$, $v^2$, $w^1$, and $w^2$ lie somewhere in the yellow areas surrounding the inner square $\B_\infty^2$.
If one of $w^1$ and $w^2$ does not lie between the dotted segments in their respective areas,
then four lines like the blue ones here can be used to obtain a quadrangle $\tilde{Q}^1$
with significantly enough smaller area than the outer square such that it still contains all of $S$.}
\label{fig:case1}
\end{figure}

Next, note that the unique point in $U$ with $y$-coordinate equal to $v^2_2$ is $(1+\frac{v^1_1+c-1}{\nicefrac{1}{2} + \delta}, v^2_2)$.
Since any $x \in S$ satisfies $v^1_1 \leq x_1$ and $v^2_2 \leq x_2 \leq w^2_2 \leq v^2_2 + c$,
we obtain that all of $S$ is contained in the quadrangle
\[
    \tilde{Q}^1
    := \conv \left( \left\{
        (1,v^2_2+c),
        (v^1_1,v^2_2+c),
        (v^1_1,v^2_2),
        \left( 1+\frac{v^1_1+c-1}{\nicefrac{1}{2} + \delta}, v^2_2 \right)
    \right\} \right).
\]
We can easily compute the area of $\tilde{Q}^1$ to be
\[
    (1-v^1_1) c + \frac{1}{2} \cdot \frac{v^1_1+c-1}{\nicefrac{1}{2} + \delta} c
    = \frac{c (c + 2 \delta - 2 v^1_1 \delta)}{1 + 2 \delta}.
\]
Since $v^1_1 \geq -\frac{c}{2}$, we therefore obtain
\[
    |\tilde{Q}^1|
    \leq \frac{c (c (1+\delta) + 2\delta))}{1 + 2 \delta}.
\]
Altogether, the claim holds in this case.
Using analogous constructions for the three cases where $w^1_2 > v^2_2 + (\frac{1}{2} + \delta) c$,
$w^2_1 < v^1_1 + (\frac{1}{2} - \delta) c$, or $w^2_1 > v^1_1 + (\frac{1}{2} + \delta) c$,
we may from now on assume that $|w^1_2 - (v^2_2 + \frac{c}{2})| \leq \delta c$ and $|w^2_1 - (v^1_1 + \frac{c}{2})| \leq \delta c$.

For the second step, we use the additional restrictions on $w^1$ and $w^2$ to find further quadrangles that contain all of $S$.
We define two new lines by
\[
    U'
    := \left\{ (x_1,x_2) \in \R^2 : x_2 = \frac{(\nicefrac{1}{2} - \delta) c}{v^1_1 + c - 1} (x_1 - 1) + v^2_2 \right\}
\]
and
\[
    U''
    := \left\{ (x_1,x_2) \in \R^2 : x_2 = - \frac{1}{5 (\nicefrac{1}{2} - \delta)} (x_1 - (v^1_1+c)) + v^2_2 + \frac{4}{5} c \right\}
\]
(see Figure~\ref{fig:case2}).

\begin{figure}[ht]
\newcommand\va{-1.2}
\newcommand\vb{-1.4}
\newcommand\vc{2.85}
\newcommand\vd{0.05}
\newcommand\pad{0.75}
\newcommand\scale{1.25}
\newcommand\ds{1/\scale pt}
\begin{tikzpicture}[line join=round, scale=\scale]
\fill[yellow!20] (\va,-1) -- (-1,-1) -- (-1,1) -- (\va,1) -- cycle;
\fill[yellow!20] (-1,\vb) -- (1,\vb) -- (1,-1) -- (-1,-1) -- cycle;
\fill[yellow!20] (1,-1) -- (\va+\vc,-1) -- (\va+\vc,1) -- (1,1) -- cycle;
\fill[yellow!20] (-1,1) -- (1,1) -- (1,\vb+\vc) -- (-1,\vb+\vc) -- cycle;

\draw[dashed, blue] (\va,\vb-\pad) -- (\va,\vb);
\draw[dashed, blue] (\va,\vb+\vc) -- (\va,\vb+\vc+\pad);
\draw[dashed] (\va+\vc,\vb-\pad) -- (\va+\vc,\vb);
\draw[dashed] (\va+\vc,\vb+\vc) -- (\va+\vc,\vb+\vc+\pad);
\draw[dashed, blue] (\va-\pad,\vb) -- (\va,\vb);
\draw[dashed, blue] (\va+\vc,\vb) -- (\va+\vc+\pad,\vb);
\draw[dashed] (\va-\pad,\vb+\vc) -- (\va,\vb+\vc);
\draw[dashed] (\va+\vc,\vb+\vc) -- (\va+\vc+\pad,\vb+\vc);

\draw[dashed] (\va,-1) -- (-1,-1);
\draw[dashed] (1,-1) -- (\va+\vc,-1);
\draw[dashed] (\va,1) -- (-1,1);
\draw[dashed] (1,1) -- (\va+\vc,1);
\draw[dashed] (-1,\vb) -- (-1,-1);
\draw[dashed] (-1,1) -- (-1,\vb+\vc);
\draw[dashed] (1,\vb) -- (1,-1);
\draw[dashed] (1,1) -- (1,\vb+\vc);

\draw[dotted] (\va+\vc,{\vb+(1/2+\vd)*\vc}) -- (1,{\vb+(1/2+\vd)*\vc});
\draw[dotted] (\va+\vc,{\vb+(1/2-\vd)*\vc}) -- (1,{\vb+(1/2-\vd)*\vc});

\draw[dotted] ({\va+(1/2+\vd)*\vc},\vb+\vc) -- ({\va+(1/2+\vd)*\vc},1);
\draw[dotted] ({\va+(1/2-\vd)*\vc},\vb+\vc) -- ({\va+(1/2-\vd)*\vc},1);

\draw (1,1) -- (-1,1) -- (-1,-1) -- (1,-1) -- cycle;

\draw[blue] (\va,\vb+\vc) -- (\va,\vb) -- (\va+\vc,\vb);
\draw (\va+\vc,\vb) -- (\va+\vc,\vb+\vc) -- (\va,\vb+\vc);

\draw[blue] plot[variable=\x,domain=0.75:\va+\vc+1] (\x,{(0.5-\vd)*\vc/(\va+\vc-1)*(\x-1) + \vb}) node[right] {$U'$};
\draw[blue] plot[variable=\x,domain=-1.5:\va+\vc+1] (\x,{-1/(5*(0.5-\vd))*(\x-(\va+\vc)) + \vb + 4/5*\vc}) node[right] {$U''$};

\draw[decorate,decoration={brace,amplitude=3pt,raise=1ex}]
	(\va+\vc,{\vb+(1/2+\vd)*\vc}) -- (\va+\vc,{\vb+(1/2)*\vc}) node[midway,xshift=3ex] {$\delta c$};
\draw[decorate,decoration={brace,amplitude=3pt,raise=1ex}]
	(\va+\vc,\vb+\vc) -- (\va+\vc,{\vb+(4/5)*\vc}) node[midway,xshift=3ex] {$\frac{1}{5} c$};
\draw[decorate,decoration={brace,amplitude=3pt,raise=1ex}]
	(\va+\vc,{\vb+(1/2)*\vc}) -- (\va+\vc,\vb) node[midway,xshift=3ex] {$\frac{1}{2} c$};
\draw[decorate,decoration={brace,amplitude=3pt,raise=7ex}]
	(\va+\vc,\vb+\vc) -- (\va+\vc,\vb) node[midway,xshift=9ex] {$c$};

\draw[decorate,decoration={brace,amplitude=3pt,raise=1ex}]
	({\va+(1/2-\vd)*\vc},\vb+\vc) -- ({\va+(1/2)*\vc},\vb+\vc) node[midway,yshift=4ex] {$\delta c$};
\draw[decorate,decoration={brace,amplitude=3pt,raise=1ex}]
	({\va+(1/2)*\vc},\vb+\vc) -- (\va+\vc,\vb+\vc) node[midway,yshift=4ex] {$\frac{1}{2} c$};
\draw[decorate,decoration={brace,amplitude=3pt,raise=6ex}]
	(\va,\vb+\vc) -- (\va+\vc,\vb+\vc) node[midway,yshift=8ex] {$c$};

\fill (\va,-0.3) circle(\ds) node[anchor=east] {$v^1$};
\fill (0.5,\vb) circle(\ds) node[anchor=north] {$v^2$};
\fill (\va+\vc-0.15,0.1) circle(\ds) node[anchor=east] {$w^1$};
\fill (0.15,\vb+\vc-0.05) circle(\ds) node[anchor=north] {$w^2$};

\fill (\va+\vc,{\vb+(1/2)*\vc}) circle(\ds);
\fill (\va+\vc,{\vb+(1/2+\vd)*\vc}) circle(\ds);
\fill (\va+\vc,{\vb+(1/2-\vd)*\vc}) circle(\ds);
\fill (\va+\vc,{\vb+(4/5)*\vc}) circle(\ds);

\fill ({\va+(1/2)*\vc},\vb+\vc) circle(\ds);
\fill ({\va+(1/2+\vd)*\vc},\vb+\vc) circle(\ds);
\fill ({\va+(1/2-\vd)*\vc},\vb+\vc) circle(\ds);
\end{tikzpicture}
\caption{An example for the situation in the second part of the proof of Lemma~\ref{lem:octagon}.
Since $w^1$ and $w^2$ are assumed to lie between the dotted segments within their respective yellow areas,
we know that all of $S$ lies in the quadrangle $\tilde{Q}^2$ bounded by the four blue lines.}
\label{fig:case2}
\end{figure}
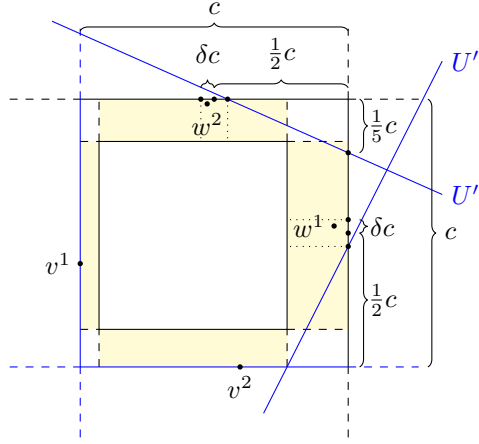

Clearly, $(1,v^2_2), (v^1_1+c,v^2_2 + (\frac{1}{2} - \delta) c) \in U'$
and $(v^1_1 + (\frac{1}{2}+\delta) c, v^2_2+c), (v^1_1+c,v^2_2+\frac{4}{5} c) \in U''$.
Since $U'$ has positive slope, it follows that all of $v^1$, $v^2$, $w^2$, and $\B_\infty^2$ lie above $U'$.
Moreover, the assumption $w^1_2 \geq v^1_2 + (\frac{1}{2} - \delta) c$ gives that $w^1$ lies above $U'$ as well.
Next, we note that the assumptions $v^1_1, v^2_2 \geq - \frac{c}{2}$, $\delta \in [0,\frac{1}{10}]$, and $c \geq \frac{14}{5}$ yield
\[
    - \frac{1}{5 (\nicefrac{1}{2} - \delta)} (1 - (v^1_1+c)) + v^2_2 + \frac{4}{5} c
    \geq - \frac{1 - \nicefrac{c}{2}}{5 (\nicefrac{1}{2} - \delta)} - \frac{c}{2} + \frac{4}{5} c
    \geq - \frac{2-c}{5} + \frac{3c}{10}
    = \frac{c}{2} - \frac{2}{5}
    \geq 1.
\]
Since $U''$ has negative slope, it follows that $v^1$, $v^2$, and $\B_\infty^2$ lie below $U''$.
Moreover, the assumptions $w^2_1 \leq v^1_1 + (\frac{1}{2} + \delta) c$
and $w^1_2 \leq v^2_2 + (\frac{1}{2} + \delta) c \leq v^2_2 + \frac{4}{5} c$
yield that $w^1$ and $w^2$ lie below $U''$ as well.
Altogether, if we write $u^1$ for the unique intersection point of $U'$ and $U''$,
and $u^2$ for the unique point in $U''$ with $u^2_1 = v^1_1$, then
\[
    S
    \subset \tilde{Q}^2
    := \conv(\{ u^1, u^2, (v^1_1,v^2_2), (1,v^2_2) \}).
\]
Note that $u^2 = (v^1_1,\frac{c}{5 (\nicefrac{1}{2} - \delta)} + v^2_2 + \frac{4}{5} c)$
follows directly from the definition of $U''$.
Similarly, we obtain that $(1,h+v^2_2)$ for $h = \frac{c+v^1_1-1}{5 (\nicefrac{1}{2} - \delta)} + \frac{4}{5} c$
is the unique point in $U''$ with first coordinate $1$.
Additionally, we can find that
\[
    u^1_1
    = v_1^1 + \frac{c (4 c (3 - 4 \delta) + (7 + 4 \delta - 20 \delta^2) (v^1_1-1))}
    {c (9 - 20 \delta + 20 \delta^2) + 4 (v^1_1 - 1)}.
\]
Now, the area of $\tilde{Q}^2$ can be computed to be
\begin{align*}
    |\tilde{Q}^2|
    & = h (1-v^1_1) + \frac{1}{2} (1 - v^1_1) (u^2_2 - v^2_2 - h) + \frac{1}{2} h (u^1_1 - 1)
    \\
    & = \frac{1}{2} h (u^1_1 - v^1_1) + \frac{1}{2} (1-v^1_1) (u^2_2 - v^2_2)
    \\
    & = \frac{c (3 - 4 \delta) + v^1_1 - 1 }{5 (1 - 2 \delta)} \cdot \frac{c (4 c (3 - 4 \delta) + (7 + 4 \delta - 20 \delta^2) (v^1_1-1))}{c (9 - 20 \delta + 20 \delta^2) + 4 (v^1_1 - 1)}
    \\
    & \qquad\qquad - \frac{c (3 - 4 \delta) (v^1_1-1)}{5 (1 - 2 \delta)}
    = \zeta_{c,\delta}(v_1^1).
\end{align*}
Since $-\frac{c}{2} \leq v^1_1$, the claim now follows from \eqref{eq:zeta_dec}.
\end{proof}

\section{Proof of \texorpdfstring{Theorem~\ref{thm:main}}{Theorem 1.1}} \label{sec:main}

We verify two more preliminary lemmas before proving the main theorem.

\begin{lemma} \label{lem:outer_ball}
Let $Q \subset \R^2$ be a proper convex quadrangle such that the midpoints of its edges are the vertices of $\B_\infty^2$.
Then $Q \subset 3 \B_\infty^2$.
\end{lemma}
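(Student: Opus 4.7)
My plan is to parameterize $Q$ by a single vertex, derive the convexity constraints on that vertex explicitly, and then verify the membership condition $|x_1|+|x_2| \leq 3$ (which characterises $3\B_\infty^2$) on each vertex of $Q$. Label the vertices of $Q$ in cyclic order as $v^1, v^2, v^3, v^4$, and let $m_i = (v^i+v^{i+1})/2$ denote the midpoint of the edge $v^i v^{i+1}$ (indices taken modulo $4$). As one traverses the boundary of $Q$, the midpoints appear cyclically on the boundary of their convex hull $\B_\infty^2$, so by applying a symmetry of $\B_\infty^2$ I may assume $m_1=(1,0)$, $m_2=(0,1)$, $m_3=(-1,0)$, and $m_4=(0,-1)$. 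Using the relation $v^{i+1}=2m_i-v^i$ and writing $v^1=(a,b)$, one finds
\[ v^2=(2-a,-b), \quad v^3=(a-2,2+b), \quad v^4=(-a,-2-b). \]

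I would next derive the convexity constraints on $(a,b)$. Computing the signed areas of the four triangles $v^i v^{i+1} v^{i+2}$ (equivalently, the cross products of consecutive edge vectors), a direct calculation shows that $Q$ is a proper convex quadrangle with this cyclic order if and only if $-1 < a+b < 1$ and $1 < a-b < 3$. These inequalities in particular yield $a \in (0,2)$ and $b \in (-2,0)$, so every coordinate of every vertex has a known sign.

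Using this sign information, the quantities $|v^i_1|+|v^i_2|$ evaluate to $a-b$, $2-(a+b)$, $4-(a-b)$, and $2+(a+b)$ for $i=1,2,3,4$, respectively, each of which lies in $(1,3)$ by the convexity constraints. Hence every vertex of $Q$ lies in $3\B_\infty^2$, and since this set is convex, $Q \subset 3\B_\infty^2$. The only nontrivial step is the signed-area computation establishing the convexity constraints; once those are in hand, the bound on $|x_1|+|x_2|$ is immediate. One can observe along the way that the constant $3$ is essentially tight, being approached only in limits where $Q$ degenerates.
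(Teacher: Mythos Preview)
Your computation is internally consistent but rests on a misreading of $\B_\infty^2$. In this paper $\B_\infty^2$ is the unit ball of the maximum norm, so $\B_\infty^2=[-1,1]^2$ with vertices $(\pm 1,\pm 1)$; the condition characterising $3\B_\infty^2$ is $\max(|x_1|,|x_2|)\le 3$, not $|x_1|+|x_2|\le 3$. (The display $\B_\infty^2=\conv(\{(\pm 1,0),(0,\pm 1)\})$ in the proof of Proposition~\ref{prop:ismailescu} is a typo in the paper, as the values $|P|=4$ and $|Q|=8$ used throughout, the slab argument in the paper's own proof of the present lemma, and the explicit use of $\|\cdot\|_\infty$ in Lemma~\ref{lem:inner_ball} all confirm.) What you have actually proved is the $\ell_1$-analogue of the lemma, which is affinely equivalent but not literally the statement in question.

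The repair is mechanical: take the midpoints to be $(1,1),(-1,1),(-1,-1),(1,-1)$ instead. With $v^1=(a,b)$ one then finds $v^2=(2-a,2-b)$, $v^3=(a-4,b)$, $v^4=(2-a,-2-b)$; the cross-product computation gives the convexity constraints $1<a<3$ and $-1<b<1$, and each $\|v^i\|_\infty<3$ follows at once. With that correction your argument is complete and in fact tidier than the paper's, which proceeds by contradiction: it first observes that $Q$ must lie in the union of the two coordinate slabs $\{|x_1|\le 1\}\cup\{|x_2|\le 1\}$, deduces that a hypothetical vertex $v$ with $\|v\|_\infty>3$ has one coordinate of absolute value at most $1$ and the other larger than $3$, and then shows that the midpoint relations force an adjacent vertex of $Q$ to have both coordinates of absolute value larger than $1$, a contradiction. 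Your explicit parameterisation sidesteps this case analysis and, as you note, makes the sharpness of the constant $3$ visible.
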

\begin{proof}
Assume, to the contrary,
that $Q \not\subset 3 \B_\infty^2$.
Then $Q$ must have a vertex $v \notin 3 \B_\infty^2$.
Since the vertices of $\B_\infty^2$ are boundary points of $Q$,
we see that $Q$ is contained in the union of the slabs
$\{ (x_1,x_2) \in \R^2 : -1 \leq x_1 \leq 1 \}$ and $\{ (x_1,x_2) \in \R^2 : -1 \leq x_2 \leq 1 \}$.
In particular, one coordinate of $v$ must have an absolute value of at most $1$.
Therefore, we may assume that $|v_1| \leq 1$ and $v_2 > 3$.

Now, let $u$ and $w$ be the vertices of $Q$ that are connected to $v$ via edges of $Q$.
We may assume
\[
    \frac{v+u}{2} = (1,1)
        \quad \text{and} \quad
    \frac{v+w}{2} = (-1,1).
\]
Then $u_2, w_2 < -1$ and $u_1, -w_1 \geq 1$, where we cannot have both $u_1 = 1$ and $w_1 = -1$.
But then one of $u$ and $w$ has two coordinates with an absolute value larger than $1$,
which is a contradiction.
\end{proof}

\begin{lemma} \label{lem:inner_ball}
Let $v \in \R^2$ with $v \in R \B_\infty^2$ for some $R > 0$.
Then for any $r \in [0,R+1]$, we have
\[
    \left( 1 - \frac{r}{R+1} \right) v + \frac{r}{R+1} \B_\infty^2
    \subset \conv(\{ v \} \cup \B_\infty^2) \cap (v + r \B_\infty^2).
\]
\end{lemma}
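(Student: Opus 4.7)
The plan is to set $\lambda := \tfrac{r}{R+1}$, which lies in $[0,1]$ by the assumption on $r$, and to rewrite the left-hand side in the convenient form
\[
    \left( 1 - \lambda \right) v + \lambda \, \B_\infty^2
    = \left\{ (1-\lambda) v + \lambda w : w \in \B_\infty^2 \right\}.
\]
This uses that $\lambda \geq 0$, so $\lambda \B_\infty^2$ coincides with the image $\{ \lambda w : w \in \B_\infty^2 \}$. From this explicit description, I would verify the two required inclusions separately on a generic point $p = (1-\lambda) v + \lambda w$ with $w \in \B_\infty^2$.

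For the inclusion $p \in \conv(\{v\} \cup \B_\infty^2)$, I would simply note that $p$ is by definition a convex combination of $v \in \{v\} \cup \B_\infty^2$ and $w \in \B_\infty^2 \subset \{v\} \cup \B_\infty^2$, so it lies in their convex hull.

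For the inclusion $p \in v + r \B_\infty^2$, I would compute
\[
    p - v = -\lambda v + \lambda w = \lambda (w - v)
\]
and then apply the triangle inequality for $\left\| \cdot \right\|_\infty$ together with $\left\| v \right\|_\infty \leq R$ and $\left\| w \right\|_\infty \leq 1$ to obtain
\[
    \left\| p - v \right\|_\infty
    \leq \lambda \left( \left\| w \right\|_\infty + \left\| v \right\|_\infty \right)
    \leq \lambda (1 + R)
    = r,
\]
so $p - v \in r \B_\infty^2$, as desired.

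There is no real obstacle here; the argument amounts to an essentially one-line check in each of the two coordinates of the intersection, once the left-hand side is expressed in the explicit parametrized form above. The only subtlety is to respect the edge cases $r = 0$ (in which both sides reduce to $\{v\}$) and $r = R+1$ (in which $\lambda = 1$ and the left-hand side reduces to $\B_\infty^2$), but the argument handles these uniformly without any separate treatment.
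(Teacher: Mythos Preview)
Your proof is correct and follows essentially the same approach as the paper: writing a generic point as $(1-\lambda)v + \lambda w$ and verifying the inclusion into $v + r\B_\infty^2$ via the triangle inequality for $\left\|\cdot\right\|_\infty$. The only difference is that you spell out the convex-hull inclusion explicitly, whereas the paper treats it as evident.
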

\begin{proof}
The triangle inequality yields for any $u \in \B_\infty^2$ that
\begin{align*}
    \left\| \left( 1 - \frac{r}{R+1} \right) v + \frac{r}{R+1} u - v \right\|_\infty
    & = \frac{r}{R+1} \left\| u - v \right\|_\infty
    \\
    & \leq \frac{r}{R+1} (\left\| u \right\|_\infty + \left\| v \right\|_\infty)
    \leq \frac{r}{R+1} (1+R)
    = r.
\end{align*}
This clearly suffices to verify the claim.
\end{proof}

We are now finally ready to verify the initially discussed improvement of the factor $\sqrt{2}$.

\begin{proof}[Proof of Theorem~\ref{thm:main}.]
We reuse the notation introduced in the proof of Proposition~\ref{prop:ismailescu}.
We shall distinguish a few cases where we introduce constants for each individual step.
These constants are chosen explicitly only at the end of the proof to allow us to improve the upper bound on the ratio of areas in the theorem.

First, assume that $|Q'| > c_1 |Q|$ for some $c_1 \geq 1$.
Then \eqref{eq:main_estimate} gives
$\sqrt{c_1} \, |Q| < \sqrt{ |Q| \, |Q'| } \leq \sqrt{2} \, |K|$ and in particular
\begin{equation} \label{eq:estimate1}
    |Q|
    < \frac{1}{\sqrt{c_1}} \sqrt{2} \, |K|.
\end{equation}
For the following considerations, we assume $|Q'| \leq c_1 |Q| = 8 c_1$.

Next, suppose that $x > c_2 y$ for some $c_2 \geq 1$.
Since clearly $y \geq 2$, we have
\[
    |\CO|^2 - |P| \, |Q'|
    = x^2 + 2 xy + y^2 - 4 xy
    = (x-y)^2
    > (c_2 - 1)^2 y^2
    \geq 4 (c_2 - 1)^2.
\]
From $|P| = 4$ and $|Q'| \leq 8 c_1$, it follows that
\begin{align*}
    |\CO|
    & > \sqrt{ |P| \, |Q'| + 4 (c_2 - 1)^2}
    \\
    & = \sqrt{ |P| \, |Q'| \left( 1 + \frac{4 (c_2 - 1)^2}{|P| \, |Q'|} \right) }
    \geq \sqrt{ |P| \, |Q'| \left( 1 + \frac{(c_2 - 1)^2}{8 c_1} \right) }.
\end{align*}
Thus, \eqref{eq:main_estimate} shows
\begin{equation} \label{eq:estimate2}
    |Q|
    \leq \sqrt{2 \, |P| \, |Q'|}
    < \frac{1}{\sqrt{1 + \nicefrac{(c_2-1)^2}{8 c_1}}} \sqrt{2} \, |\CO|
    \leq \frac{1}{\sqrt{1 + \nicefrac{(c_2-1)^2}{8 c_1}}} \sqrt{2} \, |K|.
\end{equation}
From now on, we assume that $x \leq c_2 y$.
Consequently, we have that
\[
    \frac{x^2}{c_2}
    \leq xy
    = |Q'|
    \leq 8 c_1
\]
and in particular $x \leq \sqrt{8 c_1 c_2}$.
Analogously, we may assume that $y \leq \sqrt{8 c_1 c_2}$.

Next, suppose that there exists some $v \in K$ such that $v + r \B_\infty^2$ does not meet $\CO$ for some $r > 0$.
Lemma~\ref{lem:outer_ball} shows that $Q \subset 3 \B_\infty^2$,
so $v \in K \subset Q$ yields in particular $v \in 3 \B_\infty^2$.
Moreover, $\B_\infty^2 = P \subset \CO$ is true as well, which implies $r \leq 2$.
Hence, Lemma~\ref{lem:inner_ball} shows that $K \supset \conv(\{v\} \cup \CO)$
contains a translated copy of $\frac{r}{4} \B_\infty^2$ that does not intersect $\CO$.
Therefore, $|\CO| \leq |Q| = 8$ yields
\[
    \left( 1 + \frac{r^2}{32} \right) |\CO|
    \leq |\CO| + \frac{r^2}{32} |Q|
    = |\CO| + \frac{r^2}{4}
    = |\CO| + \left| \frac{r}{4} \B_\infty^2 \right|
    < |K|,
\]
and by \eqref{eq:main_estimate} further
\begin{equation} \label{eq:estimate3}
    |Q|
    \leq \sqrt{2} \, |\CO|
    < \frac{1}{1 + \nicefrac{r^2}{32}} \sqrt{2} \, |K|.
\end{equation}
From now on, we assume that $v + r \B_\infty^2$ meets $\CO$ for all $v \in K$.
In particular, we have
\[
    K
    \subset \CO + r \B_\infty^2
    \subset (1+r) \CO.
\]

Finally, Lemma~\ref{lem:octagon} shows for any $c_3 \geq \sqrt{8 c_1 c_2} \geq x,y$ and $\delta \in [0,\frac{1}{10}]$
that there exists a quadrangle $\tilde{Q}$ with $\CO \subset \tilde{Q}$ and
\[
    |\tilde{Q}|
    \leq \max \left\{
        \frac{c_3 (c_3 (1 + \delta) + 2 \delta)}{1 + 2 \delta},
        \zeta_{c_3,\delta} \left( -\frac{c_3}{2} \right)
    \right\}.
\]
Since $K \subset (1+r) \tilde{Q}$ and $Q$ is a minimum-area quadrangle containing $K$ with $|Q| = 8$,
we must have
\begin{align*}
    8
    & \leq |(1+r) \tilde{Q}|
    = (1+r)^2 |\tilde{Q}|
    \\
    & \leq (1+r)^2 \max \left\{
        \frac{c_3 (c_3 (1 + \delta) + 2\delta)}{1 + 2 \delta},
        \zeta_{c_3,\delta} \left( - \frac{c_3}{2} \right)
    \right\}
    =: C(c_3,r,\delta).
\end{align*}
Consequently, if $c_1$, $c_2$, $c_3$, $r$, and $\delta$ are chosen such that
$\sqrt{8 c_1 c_2} \leq c_3$ and $C(c_3,r,\delta) < 8$,
then one of the above assumptions must be wrong,
that is, one of \eqref{eq:estimate1}, \eqref{eq:estimate2}, and \eqref{eq:estimate3} must be true.

We now explicitly choose
\begin{align*}
    c_1 & = 1 + 5.3 \cdot 10^{-7},
    \\
    c_2 & = 1 + \sqrt{8 c_1 (c_1 - 1)} < 1 + 2.06 \cdot 10^{-3}, \quad \text{and}
    \\
    r & = \sqrt{32 \left( \sqrt{c_1} - 1 \right)} < 2.913 \cdot 10^{-3},
\end{align*}
in which case the upper bounds in \eqref{eq:estimate1}, \eqref{eq:estimate2}, and \eqref{eq:estimate3} coincide.
We obtain $\sqrt{8 c_1 c_2} < 2.83134 =: c_3$
and choose $\delta = 2.824 \cdot 10^{-2}$.
In this case,
\[
    \frac{c_3 (c_3 + \delta (2+c_3))}{1 + 2 \delta}
    < 7.95359
\]
and with \eqref{eq:zeta_dec} also
\[
    \zeta_{c_3,\delta} \left( - \frac{c_3}{2} \right)
    < 7.95359.
\]
Finally,
\[
    C(c_3,r,\delta)
    < 7.95359 (1+r)^2
    < 7.999996.
\]
In summary, we have found an admissible choice of $c_1$, $c_2$, $c_3$, $r$, and $\delta$,
for which
\[
    \frac{1}{\sqrt{c_1}}
    = \frac{1}{\sqrt{1 + \nicefrac{(c_2-1)^2}{8 c_1}}}
    = \frac{1}{1 + \nicefrac{r^2}{32}}
    < 0.99999974
    = 1 - 2.6 \cdot 10^{-7}.
\]
This concludes the proof as discussed above.
\end{proof}

\section{Concluding Remarks}

We would like to point out the (absurdly) large gap between the conjectured value of $c_4$ by Kuperberg \cite{Ku} (which is the same as the established lower bound by Hong et al.~\cite{H23}) and the upper bound in Theorem~\ref{thm:main}. 
It is clear that there is room for improvement. We note that with careful tweaking of the argument and by distinguishing more cases, one could probably improve the upper bound by some orders of magnitude at the cost of making the paper less readable. However, the size of the gap remains. This reminds us of the packing density of the regular tetrahedron, which has long been known to be less than $1$ as regular tetrahedra do not tile space, see \cite{Ft}*{Section~16.8} for history of the problem and references. However, the best known upper bound is $(1-2.6\cdot 10^{-25})$  by Gravel, Elser, and Kallus \cite{GEK11}, while the best construction, by Chen, Engel, and Glotzer \cite{ChEG10}, yields $0.856347\ldots$, with a huge gap between them. Determining the exact value of $c_4$ is probably not as formidable as that of the packing density of the regular tetrahedron (for which there is not even an explicit conjecture) but being the "simplest" case of a long-standing approximation problem it certainly deserves further attention.

\section*{Acknowledgements}
The research of Ferenc Fodor was supported by project no.~150151, which has been implemented with the support provided by the Ministry of Culture and Innovation of Hungary from the National Research, Development and Innovation Fund, financed under the ADVANCED\_24 funding scheme.

\begin{bibdiv}
\begin{biblist}
\bib{Ch}{article}{
    author={Chakerian, G. D.},
    title={Minimum area of circumscribed polygons},
    journal={Elem. Math.},
    volume={28},
    date={1973},
    pages={108--111},
    issn={0013-6018},
    %review={\MR{322682}},
}
    
\bib{ChLa}{article}{
   author={Chakerian, G. D.},
   author={Lange, L. H.},
   title={Geometric extremum problems},
   journal={Math. Mag.},
   volume={44},
   date={1971},
   pages={57--69},
   issn={0025-570X},
   %review={\MR{0301638}},
   %doi={10.2307/2688915},
}

\bib{ChEG10}{article}{
   author={Chen, Elizabeth R.},
   author={Engel, Michael},
   author={Glotzer, Sharon C.},
   title={Dense crystalline dimer packings of regular tetrahedra},
   journal={Discrete Comput. Geom.},
   volume={44},
   date={2010},
   number={2},
   pages={253--280},
   issn={0179-5376},
   %review={\MR{2671012}},
   %doi={10.1007/s00454-010-9273-0},
}

\bib{E}{article}{
    author={Eggleston, H. G.},
    title={On triangles circumscribing plane convex sets},
    journal={J. London Math. Soc.},
    volume={28},
    date={1953},
    pages={36--46},
    issn={0024-6107},
    %review={\MR{54267}},
    %doi={10.1112/jlms/s1-28.1.36},
}

\bib{FTL40}{article}{
    author={Fejes, L.},
    title={Eine Bemerkung zur Approximation durch $n$-Eckringe},
    %language={German},
    journal={Compositio Math.},
    volume={7},
    date={1940},
    pages={474--476},
    issn={0010-437X},
    %review={\MR{1588}},
}

\bib{Ft}{book}{
    author={Fejes T\'oth, L\'aszl\'o},
    author={Fejes T\'oth, G\'abor},
    author={Kuperberg, W\l odzimierz},
    title={Lagerungen---arrangements in the plane, on the sphere, and in space},
    series={Grundlehren der mathematischen Wissenschaften %[Fundamental
    %Principles of Mathematical Sciences]
    },
    volume={360},
    %note={Translated from the German [0057566];
    %With a foreword by Thomas Hales},
    publisher={Springer, Cham},
    date={2023},
    %date={[2023] \copyright 2023},
    pages={xxiv+442},
    isbn={978-3-031-21799-9},
    isbn={978-3-031-21800-2},
    %review={\MR{4628019}},
    %doi={10.1007/978-3-031-21800-2},
}

\bib{GEK11}{article}{
    author={Gravel, Simon},
    author={Elser, Veit},
    author={Kallus, Yoav},
    title={Upper bound on the packing density of regular tetrahedra and octahedra},
    journal={Discrete Comput. Geom.},
    volume={46},
    date={2011},
    number={4},
    pages={799--818},
    issn={0179-5376},
    %review={\MR{2846180}},
    %doi={10.1007/s00454-010-9304-x},
}

\bib{Gr}{article}{
    author={Gross, W.},
    title={\"Uber affine Geometrie XIII: Eine Minimumeigenschaft der Ellipse und des Ellipsoids},
    journal={Ber. Verh. S\"achs. Akad. Wiss. Leipz., Math.-Nat.wiss. Kl.},
    volume={70},
    pages={38--54},
    date={1918},
}

\bib{H23}{article}{
    author={Hong, David E.},
    author={Ismailescu, Dan},
    author={Kwak, Alex},
    author={Park, Grace Y.},
    title={On the smallest area $(n-1)$-gon containing a convex $n$-gon},
    journal={Period. Math. Hungar.},
    volume={87},
    date={2023},
    number={2},
    pages={394--403},
    issn={0031-5303},
    %review={\MR{4666919}},
    %doi={10.1007/s10998-023-00527-4},
}

\bib{Is}{article}{
    author={Ismailescu, Dan},
    title={Circumscribed polygons of small area},
    journal={Discrete Comput. Geom.},
    volume={41},
    date={2009},
    number={4},
    pages={583--589},
    issn={0179-5376},
    %review={\MR{2496318}},
    %doi={10.1007/s00454-008-9072-z},
}

\bib{Ku}{article}{
    author={Kuperberg, W.},
    title={On minimum area quadrilaterals and triangles circumscribed about convex plane regions},
    journal={Elem. Math.},
    volume={38},
    date={1983},
    number={3},
    pages={57--61},
    issn={0013-6018},
    %review={\MR{0703939}},
}   

\bib{PSz}{article}{
    author={Pe\l czy\'{n}ski, A.},
    author={Szarek, S. J.},
    title={On parallelepipeds of minimal volume containing a convex symmetric body in ${\bf R}^n$},
    journal={Math. Proc. Cambridge Philos. Soc.},
    volume={109},
    date={1991},
    number={1},
    pages={125--148},
    issn={0305-0041},
    %review={\MR{1075126}},
    %doi={10.1017/S0305004100069619},
}

\bib{P55}{article}{
    author={Petty, C. M.},
    title={On the geometry of the Minkowski plane},
    journal={Riv. Mat. Univ. Parma},
    volume={6},
    date={1955},
    pages={269--292},
    issn={0035-6298},
    %review={\MR{0083786}},
}

\bib{Sas}{article}{
    author={Sas, Ernst},
    title={\"{U}ber eine Extremumeigenschaft der Ellipsen},
    %language={German},
    journal={Compositio Math.},
    volume={6},
    date={1939},
    pages={468--470},
    issn={0010-437X},
    %review={\MR{1557041}},
}
\end{biblist}
\end{bibdiv}

\end{document}